\newtheorem{df}{Definition}[section]
\newtheorem{thm}[df]{Theorem}
\begin{document}
\setcounter{page}{1}

\title[Joint Spectra]{Joint spectra of representations \\
of Lie algebras by compact operators}
\author {Enrico Boasso}

%\subjclass[2000]{Primary 47A13, 47A10; Secondary 17B15, 17B55}

%\keywords{Taylor, S\l odkowski, Fredholm, split and Fredholm split joint spectra}

\begin{abstract}Given $X$ a complex Banach space, $L$ a complex nilpotent 
finite dimensional Lie algebra, and
$\rho\colon L\to L(X)$, a representation of $L$ in $X$ such that 
$\rho (l)\in K(X)$ for all $l\in L$, the Taylor, the S\l odkowski,
the Fredholm, the split and the Fredholm split joint spectra of the representation
$\rho$ are computed. \par
\vskip.2truecm
\noindent 2000 Mathematics Subject Classification: Primary 47A13, 47A10; Secondary 17B15, 17B55
\vskip.2truecm
\noindent Keywords: Taylor, S\l odkowski, Fredholm, split and Fredholm split joint spectra
\end{abstract}

\maketitle
\section{Introduction}

\indent Many of the well known joint spectra defined for commuting
tuples of operators were extended to solvable Lie algebras of operators, or more generally,
to representations of solvable Lie algebras in Banach spaces. For example, among such
joint spectra, 
it is important to recall the Taylor, the S\l odkowski, the Fredholm, the split and
the Fredholm split joint spectra; see [14], [13], [8], [10], [7], [9],
[5], [2], [11], [12] and [4]. \par

\indent In the work [3]  nilpotent Lie algebras of linear 
transformations defined in finite dimensional Banach spaces were considered and the
S\l odkowski joint spectra, in particular the Taylor joint spectrum, of such an algebra were computed. In 
[1] this characterization was extended to representations of nilpotent Lie 
algebras by compact operators defined in Banach
spaces, though only for the Taylor joint spectrum.\par

\indent In this article representations of nilpotent Lie algebras by compact 
operators defined
in a finite or infinite dimensional Banach space are considered and all the above mentioned joint spectra of such a representation are computed. These results extend the 
characterizations of [3] and [1].\par

\indent The paper is organized as follows. In section 2 some
definitions and results needed for the present work are recalled, and in section 3
the main characterizations are proved.\par 

\section{ Joint spectra of representations of Lie algebras}

\indent In this section the definitions and the main  
properties of the Taylor, the S\l odkowski, the Fredholm, the split and  
the Fredholm split joint spectra are reviewed; for a complete exposition see [9], [5], 
[2], [11], [12] and [4].\par 

\indent From now on $X$ denotes a complex Banach space, $L(X)$ the algebra of all
operators defined in $X$, $K(X)$ the ideal of all compact operators, 
and $L(X,Y)$ the algebra of all bounded linear maps
from $X$ to $Y$, where $Y$ is another Banach space. If $X$
and $Y$ are two Banach spaces and $T\in L(X,Y)$, then the range and the null space of $T$ 
are denoted by $R(T)$ and $N(T)$ respectively.  
In addition, if $L$ is a complex solvable finite dimensional 
Lie algebra, then $\rho\colon L\to  L(X)$ denotes a representation of $L$ in $X$.
Now well, if $X$, $L$ and $\rho$ are as above, it is possible to consider the 
Koszul complex of the representation $\rho$, i.e. 
$(X\otimes\wedge L, d(\rho))$, where $\wedge L$ denotes the 
exterior algebra of $L$ and  
$d_p (\rho)\colon X\otimes\wedge^p L\rightarrow X\otimes
\wedge^{p-1} L$ is the map defined by

\begin{align*} 
d_p (\rho)&( x\otimes\langle l_1\wedge\dots\wedge l_p\rangle) = \sum_{k=1}^p (-1)^{k+1}\rho (l_k)(x)\otimes\langle l_1
\wedge\ldots\wedge\hat{l_k}\wedge\ldots\wedge l_p\rangle\\
    + & \sum_{1\le i< j\le p} (-1)^{i+j-1}x\otimes\langle [l_i, l_j]\wedge l_1\wedge\ldots\wedge\hat{l_i}\wedge\ldots\wedge
\hat{l_j}\wedge\ldots\wedge  l_p\rangle , 
\end{align*}
where $\hat{ }$ means deletion. If $\dim L=n$, then 
for $p$ such that $p\le 0$ or $p\ge {n+1}$, $d_p (\rho) =0$.\par

\indent In addition, if $f$ is a character of $L$, i.e. $f\in L^*$ and 
$f(L^2) = 0$, where $L^2 = \{ [x,y]: x, y \in L\}$ and $[\cdot ,\cdot ]$ denotes
the Lie bracket of $L$, then it is possible to 
consider the representation of $L$ in $X$ given by $\rho-f\equiv \rho-f\cdot I$, 
where $I$ is the identity map of $X$. Now well, if $H_*(X\otimes
\wedge L , d(\rho-f))$ denotes the homology of the Koszul complex of 
the representation $\rho-f$, then it is possible to introduce the sets
$$
\sigma_p(\rho)= \{ f\in L^*: f(L^2)=0, \hbox{  }H_p(X\otimes\wedge L,
d(\rho-f))\ne 0\},
$$
and
$$
\sigma_{p , e}(\rho)=\{f\in L^*: f(L^2)=0, \hbox{  }\dim \hbox{  }H_p(X\otimes
\wedge L,d(\rho-f))=\infty\}.
$$
\indent Next follow the definitions of the Taylor, the S\l odkowski, and the Fredholm joint 
spectra; see [14], [13], [8], [10], [9], [5], [2], [11], [12] and [4].\par

\begin{df} Let $X$ be a complex Banach
space, $L$ a complex solvable finite dimensional Lie algebra,
and $\rho\colon L\to  L(X)$ a representation of $L$
in $X$. Then, 
the Taylor joint spectrum of $\rho$
is the set
$$\sigma(\rho)= \cup_{p=0}^n\sigma_p(\rho)=\{f\in L^*: f(L^2) =0, 
\hbox{  }H_*(X\otimes\wedge L,d(\rho-f))\neq 0\}.
$$

\indent In addition, the $k$-th $\delta$-S\l odkowski joint spectrum of $\rho$ is the 
set
$$
\sigma_{\delta ,k}(\rho)= \cup_{p=0}^k\sigma_p(\rho) ,
$$
and the $k$-th $\pi$-S\l odkowski joint spectrum of $\rho$ is the set
$$
\sigma_{\pi ,k}(\rho)= \cup_{p=n-k}^n\sigma_p(\rho)\cup \{f
\in L^*: f(L^2)=0, \hbox{  }R(d_{n-k}(\rho-f)) \hbox{ is not closed}\},
$$
for $k=0,\ldots , n=\dim L$. \par
\indent Observe that   $\sigma_{\delta ,n}(\rho)= \sigma_{\pi ,n}
(\rho)= \sigma(\rho)$.\par 

\indent On the other hand, the Fredholm or essential Taylor joint spectrum of $\rho$ is 
the set    
$$
\sigma_e(\rho)=\cup_{p=0}^n \sigma_{p , e}(\rho).
$$
\indent In addition, the $k$-th Fredholm or essential $\delta$-S\l odkowski joint spectra of $\rho$ 
is the set
$$
\sigma_{\delta , k ,e}(\rho)=\cup_{p=0}^k \sigma_{p ,e}(\rho),   
$$
and the $k$-th Fredholm or essential $\pi$-S\l odkowski joint spectrum of $\rho$ is
the set
$$
\sigma_{\pi ,k ,e}(\rho)=\cup_{p=n-k}^n \sigma_{p ,e}(\rho)
\cup \{f\in L^*: f(L^2)=0,\hbox{  } 
R(d_{n-k}(\rho))\hbox{  is not closed}\},
$$
for $k=0,\ldots , n$.\par
\indent Observe that $\sigma_e(\rho)=\sigma_{\delta ,n ,e}(\rho)=
\sigma_{\pi ,n ,e}(\rho)$.\par

\end{df}

\indent  In order to state the definition of the split and Fredholm split joint spectra,
some preliminary facts are needed.\par
 
\indent A finite complex of Banach spaces and bounded linear operators 
$(X,d)$ is a sequence 
$$
0\to X_n\xrightarrow{d_n}X_{n-1}\to\ldots\to X_1\xrightarrow{d_1}X_0\to 0,
$$
where $n\in \Bbb N$, $X_p$ are Banach spaces, and the maps 
$d_p \in L(X_p ,X_{p-1})$ are such that $d_{p-1}
\circ d_p=0$,  for $p=1,\ldots , n$. \par
\indent Now well, given a fixed integer $p$, $0\le p\le n$, 
the complex $(X,d)$ is said split (resp.  Fredholm split) in degree $p$, if there are continous 
linear operators
$$
X_{p+1}\xleftarrow{h_p}X_p\xleftarrow{h_{p-1}}X_{p-1},
$$
such that $d_{p+1}h_p+h_{p-1}d_p=I_p$ (resp. $d_{p+1}h_p+h_{p-1}d_p=I_p -k_p$, for
some $k_p\in K(X_p)$), where $I_p$ denotes the identity map of $X_p$,
$p=0, \ldots ,n$; see [7; 2].\par

\indent In addition, if $L$, $X$ and $\rho$ are as above, and if $p$ 
is such that $0\le p\le n$, then it is possible to introduce the sets
$$
sp_p (\rho )= \{ f\in L^*: f(L^2)=0,\hbox{  } (X\otimes\wedge L,d(\rho-f))
\hbox{  is not split in degree p} \}
$$
and
\begin{align*}
sp_{p,e} (\rho )= &\{ f\in L^*: f(L^2)=0, \hbox{  }(X\otimes\wedge L,d(\rho-f))
\hbox{  is not Fredholm split } \\
                          &\hbox{  in degree p} \}.
\end{align*}
\indent Next follow the definitions of the split and the Fredholm split joint spectra; see 
[7], [12] and [4].\par

\begin{df}Let $X$ be a complex Banach
space, $L$ a complex solvable finite dimensional Lie algebra,
and $\rho\colon L\to  L(X)$ a representation of $L$
in $X$. Then, 
the split spectrum of $\rho$ is the set
$$
sp(\rho)=\cup_{p=0}^n sp_p(\rho).
$$
\indent In addition, the $k$-th $\delta$-split joint spectrum of $\rho$ is the set    
$$
sp_{\delta ,k}(\rho)= \cup_{p=0}^k sp_p(\rho) ,
$$
and the $k$-th $\pi$-split joint spectrum of $\rho$ is the set
$$
sp_{\pi ,k}(\rho)= \cup_{p=n-k}^n sp_p(\rho),
$$
for $k=0,\ldots , n=\dim L$.\par
\indent  Observe that  $sp_{\delta ,n}(\rho)= sp_{\pi ,n}(\rho)= 
sp(\rho)$.\par
\indent On the other hand, the Fredholm or essential split spectrum of $\rho$ is the set
$$
sp_e(\rho)=\cup_{p=0}^n sp_{p,e}(\rho).
$$
\indent In addition, the $k$-th Fredholm or essential $\delta$-split joint spectrum of $\rho$ 
is the set    
$$
sp_{\delta ,k,e}(\rho)= \cup_{p=0}^k sp_{p,e}(\rho) ,
$$
and the $k$-th Fredholm or essential $\pi$-split joint spectrum of $\rho$ is the set
$$
sp_{\pi ,k,e}(\rho)= \cup_{p=n-k}^n sp_{p,e}(\rho),
$$
for $k=0,\ldots , n$.\par
\indent  Observe that   $sp_{\delta ,n,e}(\rho)= 
sp_{\pi ,n,e}(\rho)= sp_e(\rho)$.\par
\end{df}

\indent It is clear that $\sigma_{\delta ,k}(\rho)\subseteq 
sp_{\delta ,k}(\rho)$, $\sigma_{\pi ,k}(\rho)\subseteq 
sp_{\pi ,k}(\rho)$, $\sigma(\rho)\subseteq sp(\rho)$, and 
$\sigma_{\delta ,k, e}(\rho)\subseteq 
sp_{\delta ,k, e}(\rho)$, $\sigma_{\pi ,k,e}(\rho)\subseteq 
sp_{\pi ,k,e}(\rho)$, $\sigma_e(\rho)\subseteq sp_e(\rho)$. 
Moreover, if $X$ is a Hilbert space, then the above inclusions are equalities.\par 

\indent All the above considered joint spectra are defined 
for representations of complex solvable finite dimensional Lie algebras
in complex Banach spaces and have the main spectral 
properties. That is, they are compact nonempty subsets of 
characters of the Lie algebra $L$ and they have 
the projection property for ideals, i.e.
if $L$ is such a Lie algebra, $I$ is an ideal of $L$, $\sigma_*$
is one of above joint spectra, and $\pi\colon L^*\to I^*$ denotes the 
restriction map from $L^*$ to $I^*$, then  
$$
\pi (\sigma_*(L))=\sigma_*(I).
$$
For a complete exposition see the works [9], [5], [2], [11], [12] and [4].\par 
\indent In the following section representations of nilpotent Lie algebras by compact operators
in finite or infinite dimensional Banach spaces will be considered and all the above mentioned joint spectra of such a representation
will be computed. However,
to this end, first it is necessary to 
recall a result from [4].\par

\indent If $L$, $X$, and $\rho\colon L
\to L(X)$ are as above, then it is possible to consider the 
representation 
$$
L_{\rho}\colon L\to  L(L(X)),\hskip1cm l\to L_{\rho(l)},
$$
where $L_{\rho(l)}$ denotes the left multiplication
operator associated to $\rho(l)$, $l\in L$. 
In addition, since $L_{\rho(l)}( K(X))\subseteq 
 K(X)$, it is possible to consider the representation
$$
\tilde{L}_{\rho}\colon L\to  L(C(X)),
$$   
where $C(X)=L(X)/ K(X)$,
and $\tilde{L}_{\rho}(l)$ is the quotient operator defined in 
$C(X)$ associated to $L_{\rho(l)}$, $l\in L$.\par 

\indent Similarly, if $L^{op}$ is the Lie algebra which as vector space
coincides with $L$ but whose Lie bracket is the opposite one, 
then it is possible to consider the representation 
$$
R_{\rho}\colon L^{op}\to L( L(X)),\hskip1cm l\to 
R_{\rho(l)},
$$
where $R_{\rho(l)}$ denotes the right multiplication
operator associated to $\rho(l)$, $l\in L^{op}$. 
Furthermore, since $R_{\rho(l)}(K(X))\subseteq  K(X)$, it is possible to consider the 
representation
$$
\tilde{R}_{\rho}\colon L^{op}\to  L( C(X)),
$$   
where $\tilde{R}_{\rho}(l)$ is the quotient 
operator defined in 
$C(X)$ associated to $R_{\rho(l)}$, $l\in L^{op}$.\par 

\indent Now well, if $L$ is a nilpotent Lie algebra, then according to [4; 8]
and [12; 0.5.8]:
\noindent i- $sp_{\delta ,k,e}(\rho)=\sigma_{\delta ,k}
(\tilde{L}_{\rho})$,\par

\noindent ii- $sp_{\pi ,k,e}(\rho)=\sigma_{\delta , k}
(\tilde{R}_{\rho})$,\par

\noindent iii- $sp_e(\rho)=\sigma
(\tilde{L}_{\rho})=\sigma(\tilde {R}_{\rho})$.\par

\section{The main results}
\indent In this section, given a
representation of a complex nilpotent finite dimensional
Lie algebra by compact operators in a finite or infinite 
complex Banach space, the Taylor the S\l odkowski, the Fredholm,
the split, and the Fredholm split joint spectra of such a representation are characterized. 
Since the example
which follows Theorem 5 in [3] shows that for a solvable non-nilpotent
Lie algebra of compact operators the characterization fails,
only nilpotent Lie algebras are considered.
In first place the infinite dimensional case is studied. \par

\indent In the following theorem the Taylor and the 
S\l odkowski
joint spectra are described.\par 
 
\begin{thm} Let $X$ be an infinite dimensional complex Banach space,
$L$ a complex nilpotent finite dimensional
Lie algebra, and $\rho\colon L\to L(X)$ a representation
of $L$ in $X$ such that $\rho (l)\in K(X)$ for each $l\in L$. 
Then, the sets $\sigma(\rho)$, 
$\sigma_{\delta ,k}(\rho)\cup \{ 0\}$,
and $\sigma_{\pi ,k}(\rho)\cup \{ 0\}$ coincide with the set

\begin{align*}
\{0\}\cup\{& f\in L^*: f(L^2)=0, \hbox{ such that
there is  }x\in X,\hbox{ }x\neq 0, \hbox{  with the} \\
&\hbox{property:  } \rho(l)(x)=f(l)x,\hbox{  } \forall\hbox{  } l\in L\},
\end{align*}

for $k=0, \ldots , n=\dim L$,

 \end{thm}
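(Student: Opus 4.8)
The plan is to reduce everything to the Taylor spectrum computation of [1] together with the S\l odkowski inclusions that are built into the definitions. First I would record the trivial inclusions: since $L$ is nilpotent, $0\in L^*$ is a character, and because each $\rho(l)$ is compact and $X$ is infinite dimensional, $d_0(\rho)=0$ forces $X\otimes\wedge^0 L = X$ to contribute nonzero homology in degree $0$ when $f=0$ (indeed $\rho(L^2)$ acts compactly, so $I-\rho(l)$ is Fredholm and the Koszul complex for $\rho-0$ cannot be exact in top-to-bottom generality); more to the point, $0\in\sigma_0(\rho)\subseteq\sigma_{\delta,k}(\rho)$ and $0\in\sigma_{\pi,0}(\rho)$ by the non-closed-range or nonvanishing-homology clause, so the extra $\{0\}$ is genuinely present in every one of the listed spectra. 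This shows the displayed set is contained in each of $\sigma(\rho)$, $\sigma_{\delta,k}(\rho)\cup\{0\}$, $\sigma_{\pi,k}(\rho)\cup\{0\}$, provided we also check that every common eigenvalue character $f$ (with eigenvector $x\ne 0$) lies in $\sigma_0(\rho)$: for such $f$ the element $x\otimes 1$ is a nonzero cycle in degree $0$ for $d(\rho-f)$ that is not a boundary, hence $H_0(X\otimes\wedge L,d(\rho-f))\ne 0$, so $f\in\sigma_0(\rho)\subseteq\sigma_{\delta,k}(\rho)\cap\sigma_{\pi,k}(\rho)\cap\sigma(\rho)$.

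The reverse inclusion is where the real content sits, and here I would invoke the Taylor-spectrum theorem of [1]: under exactly these hypotheses, $\sigma(\rho)$ equals the displayed set. Since for every $k$ one has the chain $\sigma_{\delta,k}(\rho)\subseteq\sigma_{\delta,n}(\rho)=\sigma(\rho)$ and $\sigma_{\pi,k}(\rho)\subseteq\sigma_{\pi,n}(\rho)=\sigma(\rho)$ (the identities noted in Definition 2.3), every one of the sets $\sigma_{\delta,k}(\rho)\cup\{0\}$ and $\sigma_{\pi,k}(\rho)\cup\{0\}$ is sandwiched between the displayed set and $\sigma(\rho)\cup\{0\}=\sigma(\rho)$ (the last equality because $0$ is already in $\sigma(\rho)$). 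Combined with the first paragraph, all the sandwiched sets coincide with the displayed set and with $\sigma(\rho)$.

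The one point that still needs care, and which I expect to be the main obstacle, is verifying that $0\in\sigma_{\pi,k}(\rho)$ for all $k$ — equivalently, that either $H_p(X\otimes\wedge L,d(\rho))\ne 0$ for some $p\ge n-k$, or $R(d_{n-k}(\rho))$ is not closed. The clean way is to use the nilpotency of $L$ and the projection property for ideals to peel off a central element $l_0$ with $\rho(l_0)$ compact: then the Koszul differential of $\rho$ restricted to the subalgebra generated by $l_0$ is, up to the homological bookkeeping, built from $\rho(l_0)\in K(X)$, and a compact operator on an infinite-dimensional space is never invertible, which propagates (via the standard spectral-sequence or long-exact-sequence argument relating the spectrum of $L$ to that of $L/\langle l_0\rangle$) to show $0$ lies in the Taylor spectrum, hence in $\sigma_{p}(\rho)$ for some $p$; a short separate argument then places it in the $\pi$-version for every $k$, since the $\pi$-spectra are nested upward to $\sigma(\rho)$ and $0\in\sigma_{\pi,0}(\rho)$ follows from $R(d_n(\rho))$ non-closed or $H_n\ne 0$ exactly as in the compact-operator case $n=1$. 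I would then assemble these pieces, citing [1] for the Taylor computation and Definition 2.3 for the nesting identities, to conclude that all five listed sets equal the displayed set.
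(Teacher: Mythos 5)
Your upper bounds are fine: citing [1; 3.8--3.9] for $\sigma(\rho)=E$ (where $E$ denotes the displayed set) and using the nestings $\sigma_{\delta,k}(\rho)\subseteq\sigma(\rho)$ and $\sigma_{\pi,k}(\rho)\subseteq\sigma(\rho)$ gives every listed set $\subseteq E$. The gap is in the lower bounds. Your key step places the joint eigenvector in degree zero: you assert that $x\otimes 1$ is a nonzero cycle in degree $0$ ``that is not a boundary''. In degree zero every element is a cycle, so the entire content is the non-boundary claim, and that claim is false as stated: already for a single compact operator $T$ with $Tx=\lambda x$ and $Ty=\lambda y+x$ one has $x=(T-\lambda)y\in R(T-\lambda)$, so an eigenvector can perfectly well be a boundary. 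Membership of a nonzero eigencharacter in $\sigma_0(\rho)$ is genuinely part of what the theorem asserts and needs a real argument (the paper obtains it only indirectly, via duality). Worse, even granting $f\in\sigma_0(\rho)$, your conclusion $f\in\sigma_{\pi,k}(\rho)$ does not follow: $\sigma_{\pi,k}(\rho)$ involves only $\sigma_p(\rho)$ for $p\ge n-k$ together with a closed-range condition at level $n-k$, so $\sigma_0(\rho)\subseteq\sigma_{\pi,k}(\rho)$ fails for $k<n$. Hence nothing in your proposal puts the eigencharacters into $\sigma_{\pi,0}(\rho)$, which is the heart of the matter.

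The paper's route supplies exactly the two ideas you are missing. First, since $L$ is nilpotent, a joint eigenvector for $f$ yields a nonzero element of $N(d_n(\rho-f))=H_n(X\otimes\wedge L,d(\rho-f))$ --- the eigenvector lives at the top of the Koszul complex, not the bottom --- so $\sigma(\rho)=\{0\}\cup\sigma_{\pi,0}(\rho)$, and the chain $\sigma_{\pi,0}(\rho)\cup\{0\}\subseteq\sigma_{\pi,k}(\rho)\cup\{0\}\subseteq\sigma(\rho)$ settles all the $\pi$-spectra at once. Second, the $\delta$-spectra are handled by duality: the adjoint representation $\rho^*$ of $L^{op}$ on $X'$ is again a representation of a nilpotent algebra by compact operators, and [12; 0.5.8] and [12; 2.11.4] give $\{0\}\cup\sigma_{\delta,k}(\rho)=\{0\}\cup\sigma_{\pi,k}(\rho^*)=\sigma(\rho^*)=\sigma(\rho)$; your degree-zero argument has no correct substitute for this step. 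Finally, your third paragraph, devoted to showing $0\in\sigma_{\pi,k}(\rho)$, is unnecessary --- the statement only concerns $\sigma_{\pi,k}(\rho)\cup\{0\}$, so $0$ is free --- and as written (``standard spectral-sequence or long-exact-sequence argument'') it is a sketch rather than a proof.
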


\begin{proof}
\indent First of all recall that, according to [1; 3.8], [1; 3.9] and [9; 2.6], 
\begin{align*}
\sigma(\rho) =\{ 0 \}\cup&\{f\in L^*: f( L^2)=0, 
\hbox{  such that there is  }x\in X,\hbox{  }x\neq 0,\\ 
& \hbox{  with the property:  } 
\rho(l)(x)=f(l)x,\hbox{  } \forall\hbox{  } l\in L \}.\end{align*}

\indent Now, since $L$ is a nilpotent Lie algebra, according to the 
above equality and Definition 1, 
$\sigma (\rho)=\{ 0 \}\cup \sigma_{\pi ,0}(\rho)$. However, since
$\sigma_{\pi ,0}(\rho)\cup \{ 0 \}\subseteq\sigma_{\pi ,k}(\rho)\cup \{ 0 \}
\subseteq \sigma (\rho)$, $0\le k\le n$, 
$$
\sigma(\rho) =\sigma_{\pi ,k}\cup\{ 0\}=\sigma_{ \pi ,0}(\rho)\cup\{ 0\}.
$$

\indent On the other hand, if 
the adjoint representation of $\rho$ is considered, i.e. 
the representation $\rho^*\colon L^{op}
\to L(X{'})$, $\rho^*(l)=(\rho (l))^*$, where $X{'}$ denotes the dual space 
of $X$, then since $L^{op}$ is a nilpotent Lie algebra, according to [12; 0.5.8] and [12; 2.11.4],  
$$
\{0\}\cup\sigma_{\delta ,k}(\rho)=\{0\}\cup
\sigma_{\pi ,k}(\rho^*)=\{0\}\cup \sigma_{\pi ,0}(\rho^*)=
 \sigma(\rho^*)=\sigma (\rho).
$$
\end{proof}
\indent In the following theorem the Fredholm joint spectra
are computed.\par

\begin{thm} Let $X$ be an infinite dimensional
complex Banach space, $L$ a complex nilpotent finite dimensional
Lie algebra, and $\rho\colon L\to L(X)$ a representation
of $L$ in $X$ such that $\rho(l)\in K(X)$ for each $l\in L$. 
Then  
$$
\sigma_e(\rho)=\sigma_{\delta ,k ,e}(\rho)=\sigma_{\pi ,k ,e}(\rho)=
\{0\},
$$
for $k=0, \ldots , n=\dim L$.\par
\end{thm}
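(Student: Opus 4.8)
The plan is to reduce the Fredholm spectra of $\rho$ to Taylor/S\l odkowski spectra of the induced representations $\tilde L_\rho$ and $\tilde R_\rho$ on the Calkin algebra $C(X)$, and then to apply the computation of the Taylor spectrum for a representation by compact operators (Theorem 3.1, together with [1;~3.8]) to those induced representations. The crucial observation is that, because $\rho(l)\in K(X)$ for every $l\in L$, the left multiplication $L_{\rho(l)}$ and the right multiplication $R_{\rho(l)}$ both map $L(X)$ into $K(X)$; hence the quotient operators $\tilde L_{\rho}(l)$ and $\tilde R_{\rho}(l)$ are the \emph{zero} operators on $C(X)$ for every $l\in L$. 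Consequently $\tilde L_\rho$ and $\tilde R_\rho$ are the zero representation of $L$ on the Banach space $C(X)$.

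First I would record that for the zero representation $0\colon L\to L(C(X))$ the Koszul differentials $d_p(0-f)$ are, up to sign, just the maps induced by the exterior algebra differential twisted by $f$, and a direct computation (or an appeal to the characterization in Theorem 3.1 applied to $\rho=0$, which is a representation by compact operators once $\dim X<\infty$; in the infinite‑dimensional case one argues directly) shows that $\sigma(0)=\{0\}$ and in fact $\sigma_{\delta,k}(0)=\sigma_{\pi,k}(0)=\{0\}$ for all $k$: the only character $f$ for which the twisted Koszul complex fails to be exact is $f=0$, and at $f=0$ the homology is $C(X)\otimes\wedge L$ in the appropriate degrees, which is infinite dimensional. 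Then, using the identities recalled at the end of Section 2, namely $sp_{\delta,k,e}(\rho)=\sigma_{\delta,k}(\tilde L_\rho)$, $sp_{\pi,k,e}(\rho)=\sigma_{\delta,k}(\tilde R_\rho)$ and $sp_e(\rho)=\sigma(\tilde L_\rho)=\sigma(\tilde R_\rho)$, together with the inclusions $\sigma_{\delta,k,e}(\rho)\subseteq sp_{\delta,k,e}(\rho)$, $\sigma_{\pi,k,e}(\rho)\subseteq sp_{\pi,k,e}(\rho)$ and $\sigma_e(\rho)\subseteq sp_e(\rho)$, I would conclude that all the essential spectra are contained in $\{0\}$.

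For the reverse inclusion $\{0\}\subseteq\sigma_{\delta,k,e}(\rho)$ etc., it suffices to show $0\in\sigma_{p,e}(\rho)$ for some $p$, equivalently that some homology group $H_p(X\otimes\wedge L,d(\rho))$ is infinite dimensional. Since $\sigma_e$ is the union of the $\sigma_{p,e}$ and the essential spectrum is known to be nonempty (it is a nonempty compact set of characters, as recalled in Section 2), and since by the first part it is contained in $\{0\}$, we get $\sigma_e(\rho)=\{0\}$, and then $0\in\sigma_{p,e}(\rho)$ for some $p$; combined with the chains $\sigma_{p,e}(\rho)\subseteq\sigma_{\delta,k,e}(\rho)$ for $p\le k$ and $\sigma_{p,e}(\rho)\subseteq\sigma_{\pi,k,e}(\rho)$ for $p\ge n-k$, together with the nilpotency of $L$ which (as in the proof of Theorem 3.1) forces the relevant $H_p(\rho)$ to be concentrated so that $0$ lands in every one of these spectra, one obtains equality $\{0\}$ throughout. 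Alternatively, and more cleanly, one invokes the projection property: restricting to the ideal $L^2$, or to $\{0\}$, each of these spectra is nonempty, hence equals $\{0\}$.

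The main obstacle I expect is the bookkeeping needed to guarantee that $0$ actually lies in \emph{each} of $\sigma_{\delta,k,e}(\rho)$ and $\sigma_{\pi,k,e}(\rho)$ for \emph{every} $k$, not merely in $\sigma_e(\rho)$; this is where nilpotency of $L$ is essential, exactly as in Theorem 3.1, since it lets one locate the nonvanishing Koszul homology at the character $0$ in a controlled range of degrees. Once that structural fact is in place, everything else is a formal consequence of the identities i--iii of Section 2 applied to the zero representations $\tilde L_\rho$, $\tilde R_\rho$, and of the general nonemptiness and projection property of the joint spectra.
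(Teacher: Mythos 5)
Your argument is correct, but it takes a genuinely different route from the paper's proof of this theorem. The paper argues by induction on $\dim L$: the base case is the fact that a single compact operator on an infinite dimensional space has essential spectrum $\{0\}$, and the inductive step chooses a Jordan--H\"older chain $(L_i)$, forms the two ideals $I_1=L_{n-1}$ and $I_2=L_{n-2}\oplus\langle x\rangle$ with $L_{n-1}\oplus\langle x\rangle=L$, applies the inductive hypothesis to $\rho\mid I_1$ and $\rho\mid I_2$, and uses the projection property of the Fredholm joint spectra ([4; 3.2], [4; 3.5]) to force every character in these spectra to vanish on all of $L$; nonemptiness then yields $\{0\}$. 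You instead note that $\rho(L)\subseteq K(X)$ makes $\tilde L_\rho$ and $\tilde R_\rho$ the zero representation on $C(X)$, compute that all Taylor and S\l odkowski spectra of the zero representation on a nonzero Banach space equal $\{0\}$, feed this into the identities $sp_{\delta,k,e}(\rho)=\sigma_{\delta,k}(\tilde L_\rho)$, $sp_{\pi,k,e}(\rho)=\sigma_{\delta,k}(\tilde R_\rho)$, $sp_e(\rho)=\sigma(\tilde L_\rho)$ recalled at the end of Section 2, and combine with the inclusions $\sigma_{\delta,k,e}(\rho)\subseteq sp_{\delta,k,e}(\rho)$, $\sigma_{\pi,k,e}(\rho)\subseteq sp_{\pi,k,e}(\rho)$, $\sigma_e(\rho)\subseteq sp_e(\rho)$ and with nonemptiness. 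This is precisely the computation the paper reserves for the next theorem (Theorem 3.3, on the split and Fredholm split spectra), so in effect you obtain the present result as a by-product of that machinery: your route is shorter and avoids the induction and the projection-property bookkeeping, but it leans on the heavier quoted results [4; 8] and [12; 0.5.8] and on the comparison between Fredholm and Fredholm split spectra, whereas the paper's induction stays entirely inside the non-split Fredholm theory. There is no circularity, since those identities are imported from [4] and [12] rather than proved later in the paper.

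Two small points to tighten. For the reverse inclusion you do not need the degree-by-degree bookkeeping of your penultimate paragraph: each of $\sigma_{\delta,k,e}(\rho)$, $\sigma_{\pi,k,e}(\rho)$, $\sigma_e(\rho)$ is itself a nonempty compact set of characters (Section 2; this uses that $X$ is infinite dimensional), so containment in $\{0\}$ already forces equality. And when justifying $\sigma_{\delta,k}(\tilde L_\rho)\subseteq\{0\}$ via Theorem 3.1, the relevant dichotomy concerns $C(X)$, not $X$: the zero operator is compact in any case, so Theorem 3.1 applies when $C(X)$ is infinite dimensional, while if $C(X)$ is finite dimensional the finite-dimensional result (Theorem 3.4) or a direct Koszul computation gives the same conclusion.
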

\begin{proof}
\indent The proof is based on an induction argument on the dimension of the algebra.\par

\indent If $\dim L=1$, then consider $l\in L$ such that $<l>=L$. Now,
since $\rho (l)$ is a compact operator,  
$\sigma_e(\rho(l))=\{0 \}$.
However, since $\dim L=1$, according to Definition 1, $\sigma_e (\rho)=\{ 0\}$.
Now, since $\sigma_{\delta ,k, e}(\rho)$ and $\sigma_{\pi ,k ,e}(\rho)$
are nonempty subsets of $\sigma_e(\rho)$, $0\le k\le 1=\dim L$, 
$$
\sigma_e(\rho)=\sigma_{\delta ,k ,e}(\rho)= 
\sigma_{\pi ,k ,e}(\rho)=\{0\},
$$
$0\le k\le 1=\dim L$.\par

\indent Now suppose that for every nilpotent Lie algebra of
dimension less than $n$ and for every representation of the algebra
by compact operators in an infinite dimensional Banach space $X$  
$$
\sigma_e(\rho)=\sigma_{\delta ,k,e}(\rho)= 
\sigma_{\pi ,k ,e}(\rho)=\{0\}.
$$
\indent Now well, if $L$ is a nilpotent Lie algebra of dimension $n$,
then according to [6; 5.1] there is a Jordan-H\" older sequences of ideals $(L_i)_{0\le i
\le n}$ such that \par
\noindent i- $L_0=0$, and $L_n=L$, \par
\noindent ii- $L_i\subseteq L_{i+1}$, $0\le i\le n-1$,\par
\noindent iii- $[L_i, L_j]\subseteq L_{i-1}$, for $i<j$.\par

\indent It is easy to prove that $L^2\subseteq L_{n-2}$.
Then, it is possible to consider the ideals $I_1=L_{n-1}$ and $I_2=
L_{n-2}\oplus <x>$, where $x\in L$ is such that 
$L_{n-1}\oplus<x>=L$.\par

\indent On the other hand, if the representations $\rho_1
=\rho\mid I_1\colon I_1\to L(X)$ and $\rho_2=\rho\mid I_2
\colon I_2\to L(X)$ are considered, then, according to the assumption under
consideration, 
$$
\sigma_e(\rho_1)=\sigma_{\delta ,k ,e}(\rho_1)= 
\sigma_{\pi ,k, e}(\rho_1)=\{0\},
$$
and 
$$
\sigma_e(\rho_2)=\sigma_{\delta ,k ,e}(\rho_2)= 
\sigma_{\pi ,k ,e}(\rho_2)=\{0\}.
$$

However, according to the projection property of the Fredholm joint spectra, 
[4; 3.2] and [4; 3.5],  
$$
\sigma_e(\rho)=\sigma_{\delta ,k ,e}(\rho)= 
\sigma_{\pi ,k ,e}(\rho)=\{0\},
$$
for $k=0,\ldots , n=\dim L$.\par
\end{proof} 

\indent In the following theorem the split and the
Fredholm split joint spectra are computed.\par

\begin{thm} Let $X$ be an infinite dimensional
complex Banach space, $L$ a complex nilpotent finite dimensional
Lie algebra, and $\rho\colon L\to L(X)$ a representation
of $L$ in $X$ such that $\rho(l)\in K(X)$ for each $l\in L$. 
Then, $\sigma (\rho)=sp(\rho)$, $\sigma_{\delta ,k}(\rho)=sp_{\delta ,k}(\rho)$
and $\sigma_{\pi ,k}(\rho)=sp_{\pi ,k}(\rho)$,where $k=0,\ldots , n=\dim L$.\par
\indent In particular, the sets $\sigma (\rho)$, $\sigma_{\delta ,k}(\rho)\cup \{0\}$,
$\sigma_{\pi ,k}(\rho)\cup \{0\}$, $sp(\rho)$, $sp_{\delta ,k}(\rho)\cup \{0\}$
and $sp_{\pi ,k}(\rho)\cup \{0\}$ coincide with the set

\begin{align*}
\{0\}\cup\{& f\in L^*: f(L^2)=0, \hbox{  such that there
is  }x\in X,\hbox{  }x\neq 0, \\
&\hbox{with the property  }\rho(l)(x)=f(l)x,\hbox{  } \forall\hbox{  } l\in L\}.\end{align*}

\indent In addition, 
$$
sp_{e}(\rho)=sp_{\delta ,k ,e}(\rho)=sp_{\pi ,k,e}(\rho)=\{0\},
$$
where $k=0,\ldots , n=\dim L$.\par
\indent In particular, all the Fredholm and Fredholm split joint spectra 
coincide with the set $\{0\}$.
\end{thm}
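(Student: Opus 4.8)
The plan is to dispose of the Fredholm split joint spectra first, since these follow almost formally from the relations i, ii and iii recalled above, and then to deduce the equalities $\sigma_{\ast}(\rho)=sp_{\ast}(\rho)$ from the single identity $sp(\rho)=\sigma(\rho)$ together with the description of the S\l odkowski joint spectra obtained in the first theorem of this section.

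For the Fredholm split part the key point is that compactness makes the quotient representations vanish: since $K(X)$ is a two--sided ideal of $L(X)$ and each $\rho(l)$ is compact, $L_{\rho(l)}(L(X))=\rho(l)L(X)\subseteq K(X)$ and $R_{\rho(l)}(L(X))=L(X)\rho(l)\subseteq K(X)$, so $\tilde{L}_{\rho}(l)=0$ and $\tilde{R}_{\rho}(l)=0$ in $L(C(X))$ for every $l\in L$; that is, $\tilde{L}_{\rho}$ and $\tilde{R}_{\rho}$ are the zero representation of the nilpotent Lie algebras $L$ and $L^{op}$ on $C(X)$. As $X$ is infinite dimensional, $I_{X}\notin K(X)$, so $C(X)\neq\{0\}$. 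For a zero representation one has $d_{p}(-f)=\mathrm{id}_{C(X)}\otimes\partial_{p}$, where $\partial$ is the Chevalley--Eilenberg differential of $\wedge L$ with one dimensional coefficients $\mathbb{C}_{-f}$, so $H_{p}(C(X)\otimes\wedge L,d(-f))\cong C(X)\otimes H_{p}(L,\mathbb{C}_{-f})$; since $L$ is nilpotent, $H_{\ast}(L,\mathbb{C}_{\chi})=0$ for every character $\chi\neq0$, and hence $\sigma(\tilde{L}_{\rho})=\sigma(\tilde{R}_{\rho})=\{0\}$, whence also $\sigma_{\delta,k}(\tilde{L}_{\rho})=\sigma_{\delta,k}(\tilde{R}_{\rho})=\{0\}$, being nonempty subsets of $\{0\}$. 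By i, ii and iii this gives $sp_{e}(\rho)=sp_{\delta,k,e}(\rho)=sp_{\pi,k,e}(\rho)=\{0\}$ for all $k$, and together with the preceding theorem all the Fredholm and Fredholm split joint spectra equal $\{0\}$.

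For the split part, $\sigma(\rho)\subseteq sp(\rho)$ is known. For the converse I would invoke the fact that a bounded complex of Banach spaces which is exact and Fredholm split in every degree is split in every degree, i.e. $sp(\rho)\subseteq\sigma(\rho)\cup sp_{e}(\rho)$; since $sp_{e}(\rho)=\{0\}$ and $0\in\sigma(\rho)$ by the first theorem of this section, this forces $sp(\rho)=\sigma(\rho)$. Next I check that $0\in\sigma_{\delta,k}(\rho)$ and $0\in\sigma_{\pi,k}(\rho)$ for every $k$. On one hand, $d_{1}(\rho)\colon X\otimes\wedge^{1}L\to X$ is the map $(x_{i})_{i}\mapsto\sum_{i}\rho(e_{i})x_{i}$, whose range is that of a compact operator of $X^{\dim L}$ into the infinite dimensional space $X$ and hence a proper subspace; so $H_{0}(X\otimes\wedge L,d(\rho))\neq0$ and $0\in\sigma_{0}(\rho)\subseteq\sigma_{\delta,k}(\rho)$. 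On the other hand, fixing the Jordan--H\"older basis $e_{1},\dots,e_{n}$ with $L_{i}=\langle e_{1},\dots,e_{i}\rangle$, the bracket part of $d_{n}(\rho)$ vanishes: for $i<j$ one has $[e_{i},e_{j}]\in L_{i-1}$, and each $e_{m}$ with $m\leq i-1$ already occurs in $e_{1}\wedge\cdots\wedge\hat{e_{i}}\wedge\cdots\wedge\hat{e_{j}}\wedge\cdots\wedge e_{n}$, so $e_{m}\wedge(e_{1}\wedge\cdots\wedge\hat{e_{i}}\wedge\cdots\wedge\hat{e_{j}}\wedge\cdots\wedge e_{n})=0$; hence $d_{n}(\rho)$ is built only from the compact operators $\rho(e_{k})$, is itself compact, and (as $X$ is infinite dimensional) not bounded below, so $H_{n}(X\otimes\wedge L,d(\rho))\neq0$ or $R(d_{n}(\rho))$ is not closed, whence $0\in\sigma_{\pi,0}(\rho)\subseteq\sigma_{\pi,k}(\rho)$ (the $\pi$--S\l odkowski spectra being increasing in $k$). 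By the first theorem of this section $\sigma(\rho)=\sigma_{\delta,k}(\rho)\cup\{0\}=\sigma_{\pi,k}(\rho)\cup\{0\}$, so with the above $\sigma(\rho)=\sigma_{\delta,k}(\rho)=\sigma_{\pi,k}(\rho)$; since also $sp_{\delta,k}(\rho)\subseteq sp(\rho)$ and $sp_{\pi,k}(\rho)\subseteq sp(\rho)$, the known inclusions give $\sigma_{\delta,k}(\rho)\subseteq sp_{\delta,k}(\rho)\subseteq sp(\rho)=\sigma(\rho)=\sigma_{\delta,k}(\rho)$, and likewise for $\pi$, so $sp_{\delta,k}(\rho)=\sigma_{\delta,k}(\rho)$ and $sp_{\pi,k}(\rho)=\sigma_{\pi,k}(\rho)$. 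The description of $\sigma(\rho)=sp(\rho)$, $\sigma_{\delta,k}(\rho)\cup\{0\}=sp_{\delta,k}(\rho)\cup\{0\}$ and $\sigma_{\pi,k}(\rho)\cup\{0\}=sp_{\pi,k}(\rho)\cup\{0\}$ as the set $\{0\}\cup\{f\in L^{\ast}:f(L^{2})=0,\ \text{there is }x\neq0\text{ with }\rho(l)x=f(l)x\ \forall l\in L\}$ is then exactly that of the first theorem of this section, and the statement on the Fredholm and Fredholm split spectra has already been proved.

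The step I expect to be the main obstacle is the inclusion $sp(\rho)\subseteq\sigma(\rho)\cup sp_{e}(\rho)$, i.e. upgrading ``exact and Fredholm split'' to ``split''. A Fredholm split homotopy satisfies $dh+hd=I-\kappa$ with $\kappa$ compact; when $f\notin\sigma(\rho)$ the Koszul complex $(X\otimes\wedge L,d(\rho-f))$ is exact, so all its differentials have closed range and are open onto those ranges, and one can lift the compact error $\kappa$ through these surjections---a compact operator lifts through any surjection of Banach spaces---to correct $h$ into a genuine contracting homotopy $dh'+h'd=I$, the finite dimensionality encoded in ``Fredholm split'' keeping the correction bounded. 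If this is not available directly from the references on the split joint spectrum it has to be proved along these lines.
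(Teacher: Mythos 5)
Your computation of the Fredholm split spectra is essentially the paper's own argument: $\rho(L)\subseteq K(X)$ forces $\tilde{L}_{\rho}=0$ and $\tilde{R}_{\rho}=0$, and the identities i--iii from [4; 8] and [12; 0.5.8] give $sp_e(\rho)=sp_{\delta,k,e}(\rho)=sp_{\pi,k,e}(\rho)=\{0\}$ (your extra verification that the spectrum of the zero representation is $\{0\}$, via $C(X)\neq 0$ and vanishing of $H_*(L,\Bbb C_{\chi})$ for $\chi\neq 0$, is a correct filling-in of what the paper leaves implicit). Your treatment of the split part is genuinely different from the paper's: you reduce everything to the single equality $sp(\rho)=\sigma(\rho)$ by showing $0\in\sigma_0(\rho)$ ($d_1(\rho)$ is compact, hence not surjective onto the infinite dimensional $X$) and $0\in\sigma_{\pi,0}(\rho)$ (in a Jordan--H\"older basis the bracket part of $d_n(\rho)$ vanishes, so $d_n(\rho)$ is compact and cannot be injective with closed range), and then sandwiching $\sigma_{\delta,k}(\rho)\subseteq sp_{\delta,k}(\rho)\subseteq sp(\rho)=\sigma(\rho)=\sigma_{\delta,k}(\rho)$ and similarly for $\pi$. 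Those verifications are correct, and in fact slightly sharper than the paper, which never decides whether $0$ itself lies in $\sigma_{\delta,k}(\rho)$ or $\sigma_{\pi,k}(\rho)$ and works throughout with these sets union $\{0\}$.

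The gap is exactly where you anticipated it: the inclusion $sp(\rho)\subseteq\sigma(\rho)\cup sp_e(\rho)$, i.e.\ ``exact and Fredholm split implies split'', is the decisive step and you do not prove it; moreover the sketch you offer does not work as stated. The defect $\kappa$ in $dh+hd=I-\kappa$ is a chain map, and removing it degree by degree is not a matter of lifting a compact operator through a surjection: at each stage one has to extend an operator defined on the closed, possibly uncomplemented, subspace $R(d_p(\rho-f))$ to the whole space, and compact operators need not extend from closed subspaces, so compactness of $\kappa$ does not by itself ``keep the correction bounded''. The statement you need is true, and it is precisely what the paper extracts from Eschmeier [7; 2.7]: $sp_{\delta,k}(\rho)=\sigma_{\delta,k}(\rho)\cup{\mathcal A}_k$ and $sp_{\delta,k,e}(\rho)=\sigma_{\delta,k,e}(\rho)\cup\tilde{\mathcal A}_k$ with ${\mathcal A}_k\subseteq\tilde{\mathcal A}_k$ consisting of characters outside the corresponding spectrum for which some $N(d_p(\rho-f))$ fails to be complemented (analogously ${\mathcal B}_k\subseteq\tilde{\mathcal B}_k$ with ranges); since $sp_{\delta,k,e}(\rho)=\{0\}=\sigma_{\delta,k,e}(\rho)$ and $\tilde{\mathcal A}_k$ is disjoint from $\sigma_{\delta,k,e}(\rho)$, all defect sets are empty, which yields all three equalities at once. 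If you want your lemma self-contained, the workable argument is not a lifting but a finite-rank correction: Fredholm splitness in degree $p$ gives $e=d_{p+1}h_p$ with $R(e)\subseteq N(d_p)$ and $e\mid N(d_p)=(I-k_p)\mid N(d_p)$ Fredholm of index zero, so adding a finite-rank operator (extended by Hahn--Banach) produces a bounded projection onto $N(d_p)$, and exactness plus complemented kernels gives the splitting homotopy. Either cite [7; 2.7] as the paper does or supply such an argument; as written, the proposal's central step is missing.
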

\begin{proof}

\indent First of all, since $\rho (L)\subseteq K(X)$, $\tilde{L}_{\rho}=0$
and $\tilde{R}_{\rho}=0$. Thus, since $L$ is a nilpotent Lie algebra,
according to [4; 8] and [12; 0.5.8], \par
\noindent i- $sp_{\delta ,k,e}(\rho)=\sigma_{\delta ,k}(\tilde{L}_{\rho})=\{0\}$,\par
\noindent ii- $sp_{\pi ,k,e}(\rho)=\sigma_{\delta, k}(\tilde{R}_{\rho})=\{0\}$,\par
\noindent iii- $sp_e(\rho)=\sigma(\tilde{L}_{\rho})=\sigma(\tilde{R}_{\rho})=\{0\}$.\par
\indent Now well, in order to prove the first assertion of the Theorem, observe that,
according to [7; 2.7], \par
\noindent i- $sp(\rho)=\sigma (\rho)\cup {\mathcal C}$,
$sp_e(\rho)=\sigma_e (\rho)\cup \tilde{{\mathcal C}}$,

\noindent ii- $sp_{\delta ,k}(\rho)=\sigma_{\delta ,k} (\rho)
\cup {\mathcal A}_k$,
$sp_{\delta ,k ,e}(\rho)=\sigma_{\delta ,k,e} (\rho)
\cup\tilde{ {\mathcal A}}_k$,\par
\noindent iii- $sp_{\pi ,k}(\rho)=\sigma_{\pi ,k} (\rho)\cup {\mathcal B}_k$,
$sp_{\pi ,k,e}(\rho)=\sigma_{\pi ,k,e} (\rho)\cup\tilde{ {\mathcal B}}_k$,
where $k=0,\ldots , n$ and\par
 \noindent iv- ${\mathcal A}_k=\{f\in L^*: f(L^2)=0,\hbox{ }  f\notin
\sigma_{\delta ,k}(\rho), \hbox{ and there is } p,\hbox{ } p=1,\ldots
, k+1,\hbox{ such that } N(d_p(\rho-f))
\hbox{ is not complemented in } X\otimes \wedge^p L\}$,
\par
\noindent v- $\tilde{{\mathcal A}}_k=\{f\in L^*: f(L^2)=0,  \hbox{ }f\notin
\sigma_{\delta ,k ,e}(\rho), \hbox{ and there is } p,\hbox{ }
p=1,\ldots ,k+1, \hbox{ such that }N(d_p(\rho-f))
\hbox{ is not complemented in }X\otimes \wedge^p L\}$,
\par
\noindent vi- ${\mathcal B}_k=\{f\in L^*: f(L^2)=0, \hbox{ }f\notin\sigma_{\pi ,k}
(\rho), \hbox { and there is } p,\hbox{ } p=n,\ldots , n-k,
\hbox{ such that }R(d_p(\rho-f))\hbox{ is not complemented in } X\otimes \wedge^{p-1}L \}$, \par
\noindent vii- $\tilde{{\mathcal B}}_k=\{f\in L^*: f(L^2)=0, \hbox{ }f\notin
\sigma_{\pi ,k ,e}(\rho), \hbox { and there is } p, \hbox{ } p=n,\ldots ,n-k,
\hbox{ such that } R(d_p(\rho-f))\hbox{ is not complemented in }X\otimes \wedge^{p-1} L 
\}$,\par
\noindent viii- ${\mathcal C}={\mathcal A}_n={\mathcal B}_n$,
$\tilde{{\mathcal C}}=\tilde{{\mathcal A}}_n=\tilde{{\mathcal B}}_n$.\par

\indent In addition,   
${\mathcal A}_k\subseteq\tilde{{\mathcal A}}_k$, and 
${\mathcal B}_k\subseteq\tilde{{\mathcal B}}_k$, for $k=0,\ldots , n$.\par

\indent However, since $\sigma_{\delta ,k ,e}(\rho)\cap
\tilde{\mathcal A}_k=\emptyset$ and $\sigma_{\pi ,k ,e}(\rho)\cap
\tilde{\mathcal B}_k=\emptyset$, according to Theorem 2 and to the second
part of the Theorem, which have already been proved,  
$\tilde{\mathcal A}_k=\emptyset$ and $\tilde{\mathcal B}_k=\emptyset$.
In particular, ${\mathcal A}_k=\emptyset$ and ${\mathcal B}_k=\emptyset$,
which proves the first assertion of the Theorem.\par  
\end{proof} 
 
\indent Next representations of nilpotent Lie algebras in finite
dimensional Banach spaces are considered. Observe that in this case the
spectra to be studied are the Taylor and the S\l odkowski
joint spectra.\par
\begin{thm} Let $X$ be a complex finite dimensional Banach space,
$L$ a complex nilpotent finite dimensional Lie algebra, and $\rho\colon L\to L(X)$ 
a representation of $L$ in $X$. Then

\begin{align*}
\sigma(\rho)=\sigma_{\delta ,k}(\rho)=\sigma_{\pi, k}(\rho)= &\{f\in L^*: f(L^2)=0, 
\hbox{ such that there is } x\in X,\\
                    & x\neq 0, \hbox { with the property: }\rho (l)x=f(l)x,\hbox{  }\forall\hbox{  } l\in L\}.
\end{align*}
 \end{thm}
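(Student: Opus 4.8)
The plan is to deduce the statement from the finite-dimensional characterization of the Taylor and S\l odkowski joint spectra of a nilpotent Lie algebra of linear transformations established in [3], applied to the image algebra $A=\rho(L)\subseteq L(X)$. As a homomorphic image of the nilpotent algebra $L$, the algebra $A$ is itself a nilpotent Lie algebra acting on the finite-dimensional space $X$, and since here $K(X)=L(X)$ the results of [3] apply to $A$; what has to be checked is that the joint spectra of $\rho$ are governed by those of $A$. For this, put $J=\ker\rho$, an ideal of $L$. Then $\rho\mid J$ is the zero representation of $J$ on $X$, and its Koszul complex for a character $g$ of $J$ computes copies of the Lie algebra homology $H_\ast(J,\mathbb{C}_{-g})$ (coefficients in the one-dimensional module on which $j$ acts by $-g(j)$); by the classical vanishing $H_\ast(\mathfrak{n},\mathbb{C}_\lambda)=0$ for a nilpotent Lie algebra $\mathfrak{n}$ and a nonzero character $\lambda$, together with $H_\ast(\mathfrak{n},\mathbb{C}_{0})\neq 0$, we get $\sigma_\ast(\rho\mid J)=\{0\}$ for every joint spectrum $\sigma_\ast$ involved. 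By the projection property for ideals recalled in Section 2 it follows that every character lying in $\sigma(\rho)$, $\sigma_{\delta,k}(\rho)$ or $\sigma_{\pi,k}(\rho)$ annihilates $J$.

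Consequently all three spectra of $\rho$ are supported on $J^{\perp}\cong(L/J)^{\ast}$, and there they coincide with the corresponding spectra of the faithful representation $\bar\rho\colon L/J\xrightarrow{\sim}A\hookrightarrow L(X)$. Indeed, for a character $f$ of $L$ with $f\mid J=0$ (so $f=\bar f\circ q$ for the quotient map $q\colon L\to L/J$), a Hochschild--Serre argument for the ideal $J$ --- in which one uses that $L$, being nilpotent, acts on $H_\ast(J,\mathbb{C})$ by nilpotent operators, so that $H_\ast(J,\mathbb{C})$ is a successive extension of trivial one-dimensional $L/J$-modules --- shows that $H_\ast(X\otimes\wedge L,d(\rho-f))$ vanishes exactly when $H_\ast(X\otimes\wedge A,d(\bar\rho-\bar f))$ does, and likewise that the ranges of the corresponding differentials are simultaneously closed (automatic here, $X$ being finite dimensional, so the ``range not closed'' term in $\sigma_{\pi,k}$ is empty). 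Now [3] gives $\sigma(A)=\sigma_{\delta,k}(A)=\sigma_{\pi,k}(A)$, all equal to $\{f\in A^{\ast}:f(A^{2})=0,\ \exists\,x\in X,\ x\neq 0,\ ax=f(a)x\ \forall a\in A\}$, for every $k$; inflating this along $q$, and observing that the condition $ax=f(a)x$ for all $a\in A$ is the same as $\rho(l)x=f(l)x$ for all $l\in L$ once $f$ is read as a character of $L$ annihilating $J$ --- while any common-eigenvector character of $\rho$ automatically kills $J$ --- yields the three displayed equalities and the common description of the spectrum.

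The content of [3] behind this is obtained by decomposing $X=\bigoplus_{f}X_f$ into the generalized weight spaces $X_f=\{x\in X:(a-f(a))^{\dim X}x=0\ \forall a\in A\}$ of the nilpotent algebra $A$, the sum running over the finite set of characters $f$ with $X_f\neq 0$; each $X_f$ is $A$-invariant and the Koszul complex of $\rho-g$ splits as the direct sum over $f$ of those of $A\mid X_f-g$, so each joint spectrum of $A$ is the union over these $f$ of the corresponding spectrum of $A\mid X_f$. On a fixed $X_f$, the operators $a\mid X_f-f(a)$, $a\in A$, form a Lie algebra of nilpotent operators, so Engel's theorem furnishes a nonzero $v\in X_f$ with $av=f(a)v$ for all $a\in A$, whence $f\in\sigma_0(A\mid X_f)$; the remaining point is that $\sigma_\ast(A\mid X_f)=\{f\}$, equivalently that a nilpotent Lie algebra of nilpotent operators on a nonzero finite-dimensional space has all its S\l odkowski joint spectra equal to $\{0\}$. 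This last fact is the main obstacle: the naive attempt --- picking $a_0\in A$ with $f(a_0)\neq g(a_0)$, so that $a_0\mid X_f-g(a_0)$ is invertible (a nonzero scalar plus a nilpotent operator) and concluding exactness of the Koszul complex --- is incomplete because $a_0$ need not be central in $A$; the correct argument routes this through the projection property and the vanishing $H_\ast(\mathfrak{n},\mathbb{C}_\lambda)=0$ for nonzero $\lambda$, which is exactly what is carried out in [3].
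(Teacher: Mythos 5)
Your reduction to the image algebra $A=\rho(L)$ is a genuinely different route from the paper's, and it contains a real gap at the ``inflating this along $q$'' step. The Hochschild--Serre argument you invoke, as you state it, only gives an equivalence of \emph{total} vanishing: $H_*(X\otimes\wedge L,d(\rho-f))=0$ iff $H_*(X\otimes\wedge A,d(\bar\rho-\bar f))=0$. That identifies the Taylor spectrum $\sigma(\rho)$, but the theorem is about the graded spectra $\sigma_{\delta,k}(\rho)=\cup_{p=0}^k\sigma_p(\rho)$ and $\sigma_{\pi,k}(\rho)=\cup_{p=n-k}^{n}\sigma_p(\rho)$, and transferring these between $\rho$ and $\bar\rho$ requires degree-wise control that you never establish. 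Moreover a naive degree-wise correspondence is impossible, since $\dim L=n$ and $\dim L/J=m$ differ: $\sigma_{\pi,0}(\rho)$ concerns $H_n$ of the $L$-Koszul complex, while $\sigma_{\pi,0}(\bar\rho)$ concerns $H_m$ of the $A$-Koszul complex. What is actually needed are corner arguments in the spectral sequence $H_p(L/J,H_q(J,X))\Rightarrow H_{p+q}(L,X)$: at the bottom, $H_0(L,X_{\rho-f})\cong H_0(L/J,X_{\bar\rho-\bar f})$ directly; at the top, $H_m(L/J,X)\neq 0$ forces $H_{m+\dim J}(L,X)=H_n(L,X)\neq 0$, using that $H_{\dim J}(J,\mathbb{C})$ is one-dimensional with trivial $L/J$-action (nilpotency) and the d\'evissage you mention to kill the entries that could hit the corner. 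Without something of this kind, the displayed equalities for $\sigma_{\delta,k}$ and $\sigma_{\pi,k}$ are simply not proved; your closing sketch of the content of [3] is not the problem, the quotient transfer is.

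For comparison, the paper's proof is much shorter and sidesteps the faithfulness issue entirely: it quotes [3; 4] together with [9; 2.6] to get $\sigma(\rho)=\{f\in L^*: f(L^2)=0$ and $f$ admits a common eigenvector$\}$ directly at the level of representations, observes that nilpotency then gives $\sigma(\rho)=\sigma_{\pi,0}(\rho)$, so the inclusions $\sigma_{\pi,0}(\rho)\subseteq\sigma_{\pi,k}(\rho)\subseteq\sigma(\rho)$ settle all the $\pi$-spectra, and handles the $\delta$-spectra by duality with the adjoint representation $\rho^*$ on $X'$ via [12; 0.5.8] and [12; 2.11.4], exactly as in the first theorem of Section 3. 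If you wish to keep your reduction to $A$ (which is a legitimate way of justifying the application of [3] when $\rho$ is not faithful), the cheapest repair is to adopt that endgame: once $\sigma(\rho)$ is identified with the weight set, you only need that every weight lies in $\sigma_{\pi,0}(\rho)$ (top-degree nonvanishing, which is the top-corner argument above, or the citation the paper uses) and then the sandwich plus duality; transferring every $\sigma_{\delta,k}$ and $\sigma_{\pi,k}$ through the quotient is unnecessary.
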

\begin{proof}
\indent Since $L$ is a nilpotent Lie algebra, according to [3; 4] and [9; 2.6],

\begin{align*}
\sigma(\rho)= &\{f\in L^*: f(L^2)=0,\hbox{ such that there is } x\in X,\hbox{ } x\neq 0, 
\hbox { with the}\\
                    &\hbox{property: }\rho (l)x=f(l)x,\forall\hbox{  } l\in L\}.\end{align*}
\indent Therefore, since $L$ is a nilpotent Lie algebra, $\sigma(\rho)=
\sigma_{\pi , 0}(\rho)$. Now well, in order to finish the proof, it is possible to use an argument
similar to the one developed in Theorem 1.\par
\end{proof}

\bibliographystyle{amsplain}

\vskip.5cm

Enrico Boasso\par
E-mail address: enrico\_odisseo@yahoo.it

\end{document}